\theoremstyle{plain}
\newtheorem{theorem}{Theorem}[section]
\newtheorem{proposition}[theorem]{Proposition}
\newtheorem{subproposition}[theorem]{}
\newtheorem{corollary}[theorem]{Corollary}
\newenvironment{subproof}[1][\proofname]{%
  \begin{proof}[#1]%
}{%
  \end{proof}%
}
\theoremstyle{definition}
\newtheorem{definition}[theorem]{Definition}
\newcommand{\ba}{\backslash}
\newcommand{\closedN}[1]{\ensuremath{\widebar{\mathrm{Neigh}}(#1)}}
\newcommand{\openN}[1]{\ensuremath{\mathrm{Neigh}(#1)}}
\title{A splitter theorem for connected clutters}
\author[a]{Amanda Cameron\footnote{Corresponding author. Email: amanda.cameron@qmul.ac.uk}}
\author[b]{Dillon Mayhew\thanks{Email: dillon.mayhew@vuw.ac.nz}}
\affil[a]{School of Mathematical Sciences, Queen Mary University of London, Mile End Road, London, United Kingdom E1 4NS}
\affil[b]{School of Mathematics, Statistics and Operations Research, Victoria University of Wellington, PO Box 600, Wellington, New Zealand}
\begin{document}
\maketitle
\begin{abstract}
 A clutter consists of a finite set and a collection of pairwise incomparable subsets. Clutters are natural generalisations of matroids, and they have similar operations of deletion and contraction. We introduce a notion of connectivity for clutters that generalises that of connectivity for matroids. We prove a splitter theorem for connected clutters that has the splitter theorem for connected matroids as a special case: if $M$ and $N$ are connected clutters, and $N$ is a proper minor of $M$, then there is an element in $E(M)$ that can be deleted or contracted to produce a connected clutter with $N$ as a minor.
\end{abstract}

\section{Introduction}

A \emph{clutter} is a pair $(E,\mathcal{A})$, where $E$ is a finite set, and $\mathcal{A}$ is a collection of subsets of $E$, with the property that if $A$ and $A'$ are distinct members of $\mathcal{A}$, then $A\nsubseteq A'$. We refer to $E$ as the \emph{ground set} of the clutter. We call members of $\mathcal{A}$ \emph{rows} of the clutter. In the literature, elements of the ground set are often referred to as vertices, while rows are called edges. Since we will later represent rows of a clutter by vertices in a graph, we prefer to avoid this terminology. If $M$ is a clutter, then $E(M)$ denotes its ground set.

For an example of a clutter, we may take the rows to be the circuits of a matroid. Thus clutters are natural generalisations of matroids: they lie somewhere on the spectrum between matroids, and completely general hypergraphs. It may seem as though clutters are significantly more general objects than matroids, but there are some reasons to view them as being closer to the matroid end of the spectrum. In particular, there are notions of deletion and contraction for clutters. If $M=(E,\mathcal{A})$ is a clutter, and $v$ is an element of $E$, we define $M\backslash v$ to be the clutter
\[
(E-v,\{A\in\mathcal{A}\colon v\notin A\})
\]
and we let $M/v$ be the clutter on the set $E-v$ whose rows are the sets in $\{A-v\colon A\in \mathcal{A}\}$ that are minimal under subset-inclusion. We say that $M\backslash v$ and $M/v$ are produced by \emph{deleting} and \emph{contracting} $v$ respectively. These clutter operations extend the matroidal operations: if $M$ is the clutter of circuits in the matroid $N$, then $M\backslash v$ and $M/v$ are the clutters of circuits in the matroids $N\backslash v$ and $N/v$. Any clutter produced from $M$ by a (possibly empty) sequence of deletions and contractions is a \emph{minor} of $M$. A minor produced by a non-empty sequence of deletions and contractions is a \emph{proper minor}.

The following result is in \cite{cornuejols}, and shows that the order of deletion and contraction is immaterial.

\begin{proposition}
Let $M=(E,\mathcal{A})$ be a clutter, and let $v$ and $v'$ be elements of $E$. Then (i) $(M\ba v)\ba v'=(M\ba v')\ba v$, (ii) $(M/ v)/v'=(M/v')/ v$, and (iii) $(M\ba v)/v'=(M/v')\ba v$.
\end{proposition}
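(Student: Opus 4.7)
Part (i) is immediate: both $(M\ba v)\ba v'$ and $(M\ba v')\ba v$ have ground set $E-\{v,v'\}$ and row set $\{A\in\mathcal{A}\colon v,v'\notin A\}$, so a single line of definition-chasing suffices.

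For parts (ii) and (iii) I would first prove the following minimization lemma, from which everything else follows mechanically. For any family $\mathcal{F}$ of subsets of $E$ and any $v,v'\in E$, writing $\mathcal{F}-S=\{F-S\colon F\in\mathcal{F}\}$ and $\min(\mathcal{G})$ for the $\subseteq$-minimal members of $\mathcal{G}$,
\[
\min\bigl(\min(\mathcal{F}-v)-v'\bigr)=\min\bigl(\mathcal{F}-\{v,v'\}\bigr).
\]
The proof is a two-way containment. Forwards: a minimal member of the left-hand side has the form $B-v'$ with $B=A_0-v$ for some $A_0\in\mathcal{F}$, hence equals $A_0-\{v,v'\}\in\mathcal{F}-\{v,v'\}$; if some $A_1-\{v,v'\}$ were strictly smaller, picking a minimal $B_1\in\min(\mathcal{F}-v)$ below $A_1-v$ would give $B_1-v'\subsetneq B-v'$, contradicting its minimality. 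Backwards: given a minimal $A_0-\{v,v'\}$ on the right, any minimal $B_0\in\min(\mathcal{F}-v)$ with $B_0\subseteq A_0-v$ satisfies $B_0-v'\subseteq A_0-\{v,v'\}$, and because $B_0-v'$ itself has the form $A_0'-\{v,v'\}$ the two must be equal; a short argument then shows $A_0-\{v,v'\}$ is minimal in $\min(\mathcal{F}-v)-v'$ as well.

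Part (ii) follows immediately by applying the lemma with $\mathcal{F}=\mathcal{A}$: both $(M/v)/v'$ and $(M/v')/v$ have rows $\min(\mathcal{A}-\{v,v'\})$, which is symmetric in $v,v'$. For (iii), the rows of $(M\ba v)/v'$ are $\min\bigl(\{A\in\mathcal{A}\colon v\notin A\}-v'\bigr)$ by definition, while the rows of $(M/v')\ba v$ are $\{B\in\min(\mathcal{A}-v')\colon v\notin B\}$; since $v\ne v'$, excluding $v$ before or after subtracting $v'$ makes no difference, and repeating the argument of the minimization lemma (restricted to rows avoiding $v$) identifies both with the common expression. The only nontrivial step is the minimization lemma itself, where one must check that $\min(\mathcal{F}-v)$ can discard members and so might a priori lose a minimal element of $\mathcal{F}-\{v,v'\}$; the argument above shows this does not happen because one can always upgrade an arbitrary preimage to a minimal one.
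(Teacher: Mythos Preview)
The paper does not actually prove this proposition; it simply attributes the result to Cornu\'ejols and moves on. Your argument is correct and is essentially the standard one.

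The minimization lemma $\min\bigl(\min(\mathcal{F}-v)-v'\bigr)=\min\bigl(\mathcal{F}-\{v,v'\}\bigr)$ is the right engine for part (ii), and your two-way containment is sound. For part (iii) your last sentence is a little compressed; the cleanest way to finish is to note that the property ``$v\notin C$'' is downward-closed under inclusion (if $C\subseteq D$ and $v\notin D$ then $v\notin C$), and restricting any family to a downward-closed subfamily commutes with taking $\subseteq$-minima. Applied to $\mathcal{G}=\mathcal{A}-v'$ this gives
\[
\{B\in\min(\mathcal{A}-v')\colon v\notin B\}=\min\{C\in\mathcal{A}-v'\colon v\notin C\}=\min\bigl(\{A\in\mathcal{A}\colon v\notin A\}-v'\bigr),
\]
which is exactly the row set of $(M\ba v)/v'$. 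This is presumably what you meant by ``repeating the argument of the minimization lemma (restricted to rows avoiding $v$)'', but it is worth stating the downward-closedness explicitly, since that is what makes the restriction commute with $\min$. This is a matter of exposition, not a gap.
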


Clutters, moreover, have a duality involution that is analogous to matroid duality. If $M=(E,\mathcal{A})$ is a clutter, then the \emph{blocker} of $M$, written $b(M)$, has $E$ as its ground set, and its rows are the minimal subsets of $E$ that have non-empty intersection with each row of $M$. Edmonds and Fulkerson \cite{eandf} proved that $b(b(M))=M$. This involution swaps deletion and contraction, just as matroid duality does. Thus $b(M\ba v)=b(M)/v$ and $b(M/v)=b(M)\ba v$.

In this article we present evidence that pushes clutters further in the matroid direction along the matroid-hypergraph continuum. We show that some connectivity behaviour in matroids is actually just a special case of a clutter phenomenon. To do so, we must develop a notion of connectivity for clutters.

\begin{definition}
Let $M=(E,\mathcal{A})$ be a clutter. A \emph{separation} of $M$ is a partition of $E$ into non-empty parts, $X$ and $Y$, such that every row is contained in $X$ or $Y$. If $M$ admits no separation then it is \emph{connected}.
\end{definition}

This is a natural way to define connectivity for clutters, since it generalises connectivity for graphs and for matroids. If $M=(E,\mathcal{A})$ is a clutter and each row has cardinality two, then $M$ can be identified with a simple graph $G$, with vertex set $E$, whose edges are the rows of $M$. In this case, $M$ is connected if and only if $G$ is. Similarly, if the rows of $M$ are the circuits of a matroid, $N$, then separations of $M$ and $N$ exactly coincide. Therefore $M$ is connected if and only if $N$ is.

We would like to know which inductive properties of matroid connectivity extend to connected clutters. Our first observation is a negative one. If $N$ is a connected matroid, and $e$ is an element of its ground set, then either $N\backslash e$ or $N/e$ is a connected matroid \cite{oxley}[Theorem 4.3.1]. This phenomenon does not extend to clutters. To see this, consider a clutter, $M$, whose edges all have cardinality two, and therefore correspond to the edges of a graph, $G$. Assume $v$ is a cut-vertex in $G$. Then $M\backslash v$ corresponds to the graph produced from $G$ by deleting $v$ and all edges incident with it. This is certainly not a connected clutter. On the other hand, $M/v$ is produced by removing $v$, all rows containing $v$, all rows containing a neighbour of $v$ in $G$, and then adding all such neighbours as singleton rows. It is clear that this clutter will also fail to be connected.

On the other hand, our main theorem is positive. Brylawski \cite{brylawski} and Seymour \cite{seymour} independently proved that if $M$ is a connected matroid with a connected proper minor, $N$, then we can delete or contract an element from $M$ in such a way to preserve connectivity, and the minor $N$. We prove that this is a special case of a clutter phenomenon.

\begin{theorem}
\label{maintheorem}
Let $M$ and $N$ be connected clutters and assume that $N$ is a proper minor of $M$. There exists an element, $v\in E(M)$, such that either $M\backslash v$ or $M/v$ is connected and has $N$ as a minor.
\end{theorem}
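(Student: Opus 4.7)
The plan is to induct on $k=|E(M)|-|E(N)|$. In the base case $k=1$ we have $N=M\backslash v$ or $N=M/v$ for some $v$, and the theorem is immediate since $N$ is connected by hypothesis. For the inductive step, express $N=M\backslash D/C$ with disjoint $D,C\subseteq E(M)\setminus E(N)$ whose union is $E(M)\setminus E(N)$. Because the Edmonds--Fulkerson blocker interchanges deletion and contraction while preserving connectedness and minor relations, I may assume $D\neq\emptyset$ and select some $v\in D$. Set $M'=M\backslash v$, a clutter still having $N$ as a minor. If $M'$ is connected we are done with witness $v$. Otherwise $M'$ admits a separation $(X,Y)$. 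Since separations of a clutter descend to separations of its minors whenever both parts remain non-empty, and $N$ is connected, $E(N)$ must lie entirely on one side of $(X,Y)$; without loss of generality $E(N)\subseteq X$.

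I would then minimise: among all triples $(v,X,Y)$ where $v\in E(M)\setminus E(N)$ and $(X,Y)$ is a separation of $M\backslash v$ or $M/v$ (whichever preserves $N$ as a minor) with $E(N)\subseteq X$, pick one making $|Y|$ as small as possible. Any $u\in Y$ is in $D\cup C$, so the associated operation (delete if $u\in D$, contract if $u\in C$) automatically preserves $N$ as a minor; the remaining task is to find such a $u$ whose operation gives a \emph{connected} clutter. To locate such a $u$, I would exploit the connectedness of $M$: because neither $(X\cup\{v\},Y)$ nor $(X,Y\cup\{v\})$ separates $M$, certain rows of $M$ containing $v$ must ``bridge'' the partition $(X,Y)$ in prescribed ways. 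Suppose, for contradiction, that for every candidate $u\in Y$ the resulting minor is disconnected with separation $(X^*,Y^*)$ and $E(N)\subseteq X^*$. Overlaying $(X,Y)$ with $(X^*,Y^*)$ and splicing in the bridging rows, the aim is to exhibit either a separation of $M$ itself (contradicting its connectedness) or a separation of some $M\backslash v'$ or $M/v'$ with a strictly smaller $Y$-side, contradicting the minimal choice of $|Y|$.

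The principal obstacle is making this structural reassembly rigorous. Deletion and contraction act on separations asymmetrically---contraction involves a minimisation step that can both destroy rows and create new ones---so the sub-case in which the offending $u$ must be contracted must either be handled by a parallel argument or reduced to the deletion sub-case via a second appeal to blocker duality. I expect the technical heart of the proof will be an auxiliary lemma describing precisely how two compatible separations of minors of a connected clutter can be combined or refined, and verifying that this lemma is insensitive to whether the minor was obtained by deletion or by contraction.
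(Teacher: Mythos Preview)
Your plan contains a concrete error that undermines the reduction step. You assert that the blocker ``preserves connectedness,'' and use this to pass from the contraction case to the deletion case so that you may assume $D\neq\emptyset$. But clutter connectivity in the sense of this paper is \emph{not} invariant under taking blockers: the paper itself exhibits a disconnected clutter whose blocker is connected (take the circuit clutter of a matroid with a separation whose dual has girth at least~$3$; the blocker rows are the dual bases). So you cannot dualise away the possibility $D=\emptyset$. The paper handles this issue directly (its first subproposition): even when $N$ is reached by contractions only, one argues via the component structure of $G(M/u)$ that some element can be \emph{deleted} while keeping $N$ as a minor. That argument is short but not vacuous, and your sketch currently has nothing in its place.

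Beyond that, the remainder of your outline is an honest plan rather than a proof, and the hard part---your ``auxiliary lemma'' about combining two separations of different one-element minors---is exactly where the difficulty lives. Contraction is not symmetric to deletion here (it minimises, creating new rows), and since blocker duality is unavailable you cannot reduce the contraction sub-case to the deletion one. The paper avoids this asymmetry altogether: it works entirely in the bipartite incidence graph $G(M)$, recasts $M\backslash v$ as deleting the closed neighbourhood $\closedN{v}$, restricts attention to \emph{minimal} black vertices (those whose neighbourhoods are inclusion-minimal), and then runs an extremal argument on ``minimal good components'' of $G\backslash\closedN{u}$, choosing a triple $(C,u,w)$ so that a certain component is as large as possible. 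Contraction enters only in the trivial twin case (Proposition~\ref{contracttwin}). Your separation-overlay idea may be salvageable, but as it stands you would need both a replacement for the failed duality step and a genuine structural lemma handling the contraction side on its own terms.
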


This type of theorem is known as a \emph{splitter theorem}, after Seymour's well-known splitter theorem for $3$-connected matroids \cite{seymoursplitter}. We obtain, as a corollary, a weaker type of statement, known as a \emph{chain theorem}.

\begin{corollary}
Let $M$ be a non-empty connected clutter. Then there is an element, $v\in E(M)$, such that either $M\backslash v$ or $M/v$ is a connected clutter.
\end{corollary}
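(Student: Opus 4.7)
The plan is to deduce the corollary directly from Theorem \ref{maintheorem} by producing a connected proper minor $N$ of $M$ of the simplest possible kind, and then invoking the splitter theorem.

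The first ingredient is the observation that any clutter whose ground set has at most one element is vacuously connected, since the definition of a separation requires a partition of $E$ into two non-empty parts, which is impossible when $|E|\le 1$. In particular, the clutter with empty ground set and every single-element clutter is connected.

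With that in hand I would split into cases on $|E(M)|$. If $|E(M)|=1$, say $E(M)=\{v\}$, then $M\backslash v$ is a clutter on the empty ground set, hence connected by the observation above, and $v$ is the required element. If instead $|E(M)|\ge 2$, I would let $N$ be any minor of $M$ obtained by deleting all but one of the elements of $E(M)$ (in any order, using Proposition to rearrange if convenient). Then $N$ has a single-element ground set, so $N$ is connected, and $N$ is a proper minor of $M$ because at least one deletion was performed. Applying Theorem \ref{maintheorem} to $M$ and $N$ then produces an element $v\in E(M)$ such that $M\backslash v$ or $M/v$ is connected and contains $N$ as a minor; connectedness alone is what the corollary demands.

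There is essentially no obstacle in this argument: the only point that requires a moment's care is the vacuous-connectedness of the empty and single-element clutters, which is necessary to cover the degenerate base case $|E(M)|=1$ and to guarantee that the minor $N$ used in the main case is connected. Once this is noted, the corollary is an immediate consequence of the splitter theorem with the smallest possible choice of $N$.
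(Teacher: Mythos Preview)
Your proof is correct and follows essentially the same idea as the paper's: apply Theorem~\ref{maintheorem} with a trivially connected minor $N$. The paper's version is slightly more streamlined, taking $N$ to be the empty clutter in all cases (which is vacuously connected and is a proper minor of any non-empty clutter), thereby avoiding your case split on $|E(M)|$.
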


\begin{proof}
Since every clutter has the empty clutter as a minor, we simply apply Theorem~\ref{maintheorem} with $N$ equal to the empty clutter.
\end{proof}

We note that our notion of connectivity is not invariant under taking blockers. To see this, let $M$ be a clutter whose rows are the circuits of a matroid, $N$. Assume that $N$ admits a separation, $(X,Y)$, but that the dual matroid, $N^{*}$, has no circuits of size less than three. By an earlier observation, $(X,Y)$ is also a separation of $M$. The rows of $b(M)$ are the bases of $N^{*}$ \cite{fulkerson2}. Assume that $(X',Y')$ is a separation of $b(M)$, and let $x$ and $y$ be elements from $X'$ and $Y'$, respectively. Then no basis of $N^{*}$ contains both $x$ and $y$, so $N^{*}$ contains a circuit of size at most two, contrary to hypothesis. Thus $b(M)$ is a connected clutter, even though $M$ is not.

The main tool we use to prove Theorem~\ref{maintheorem} is the \emph{incidence graph} of a clutter. Let $M=(E,\mathcal{A})$ be a clutter. We use $G(M)$ to denote the incidence graph of $M$. The vertex set of $G(M)$ is $E\cup \mathcal{A}$. We say that vertices in $E$ are \emph{black} and vertices in $\mathcal{A}$ are \emph{white}. Every edge of $G(M)$ joins a black vertex to a white vertex, so $G(M)$ is bipartite. The vertex $v\in E$ is adjacent to $A\in\mathcal{A}$ in $G(M)$ if and only if $v$ is contained in $A$.

The incidence graph allows us to study clutter connectivity in graph theoretical terms.

\begin{proposition}
\label{connectediff}
Let $M$ be a clutter. If $G(M)$ is connected, then $M$ is connected. If $M$ is connected, and is not the clutter with a single element and one, empty, row, then $G(M)$ is connected.
\end{proposition}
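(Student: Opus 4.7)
The plan is to prove both implications by contraposition, handling a handful of degenerate small clutters by direct inspection.

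For the forward direction, suppose $(X,Y)$ is a separation of $M$. The natural move is to bipartition $V(G(M))$ by placing each black vertex on its own side of the separation, and each non-empty row $A$ on the (unique) side containing it. Because every row lies entirely in $X$ or entirely in $Y$, no edge of $G(M)$ crosses this bipartition, and both sides are non-empty since $X$ and $Y$ are. The one subtlety is the empty row: by incomparability of rows, if $\emptyset\in\mathcal{A}$ then $\mathcal{A}=\{\emptyset\}$, in which case $G(M)$ has no edges, and having a separation forces $|E|\geq 2$, so $G(M)$ is trivially disconnected.

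For the converse, assume $M$ is connected and is not the excluded one-element, single-empty-row clutter. When $|E|\leq 1$ only a few clutters remain, and I would check by hand that each such connected one has connected $G(M)$. For $|E|\geq 2$, I first observe that $\mathcal{A}$ is neither empty nor equal to $\{\emptyset\}$, since in either case any bipartition of $E$ into two non-empty parts would be a separation of $M$. So every row is non-empty, and therefore each white vertex of $G(M)$ is joined to every element of the row it represents. If $G(M)$ were disconnected, each connected component would meet $E$: a white vertex drags the elements of its (non-empty) row into the same component, and no component can consist of a single white vertex because that would require an empty row. Taking $X$ to be the elements of one component of $G(M)$ and $Y$ those of all other components then yields a separation of $M$, contradicting connectedness.

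The main obstacle is the bookkeeping around degenerate cases—the empty row, elements contained in no row, and very small ground sets. Once these are pinned down, each implication becomes a direct translation between connected components of $G(M)$ and separations of $M$.
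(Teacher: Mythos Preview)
Your proposal is correct and follows essentially the same contrapositive strategy as the paper: translate a separation of $M$ into a vertex bipartition of $G(M)$ with no crossing edges, and conversely, from a disconnection of $G(M)$ extract a separation of $M$ once the empty-row case is isolated. The only cosmetic difference is organisational: the paper folds the degenerate analysis (empty row, $|E(M)|\le 1$) into a single case split on whether a part of the $G(M)$-bipartition contains any black vertices, whereas you peel off $|E|\le 1$ and $\mathcal{A}\in\{\emptyset,\{\emptyset\}\}$ upfront; both routes arrive at the same place.
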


\begin{proof}
Assume that $G(M)$ is not connected. We will prove that either $M$ is not connected, or $M$ is equal to the special clutter described in the statement of the proposition. Let $(A,B)$ be a partition of the vertices of $G(M)$ into non-empty parts, such that no edge joins a vertex in $A$ to a vertex in $B$. Assume that both $A$ and $B$ contain elements of $E(M)$. Then $(A\cap E(M),B\cap E(M))$ is clearly a separation of $M$, and $M$ is not connected. Therefore we will assume that $A$ contains no element of $E(M)$. Thus every vertex in $A$ is white. It follows that $G(M)$ has a white vertex that is connected to no black vertex, and that therefore $M$ has an empty row. Since $M$ is a clutter, it follows that $M$ has exactly one, empty, row, and that therefore $G(M)$ contains a single white vertex, and no edges. Note that $E(M)$ is non-empty, for otherwise $G(M)$ contains a single vertex, and is therefore connected. If $E(M)$ contains at least two elements, then we can find a separation of $M$. Thus we assume that $E(M)$ contains exactly one element, and deduce that $M$ is the clutter described in the proposition. 

Now suppose $M$ is not connected and has a separation $(A,B)$. White vertices corresponding to rows in $A$ are incident only with elements of $A$; white vertices corresponding to rows in $B$ are incident only with elements of $B$. There are no other white vertices in $G(M)$, so this means that there are no paths between vertices in $A$ and vertices in $B$. Thus $G(M)$ must be disconnected.
\end{proof}

\section{Proof of the main theorem}

If $v$ is a vertex of a graph, then \openN{v} represents the set of neighbours of $v$ (this set excludes $v$). We say \openN{v} is the \emph{open neighbourhood} of $v$. We write \closedN{v} for the \emph{closed neighbourhood} of $v$. That is, $\closedN{v}=\openN{v}\cup\{v\}$. In order for a bipartite graph with black and white vertices to be the incidence graph of a clutter, if $u$ and $v$ are distinct white vertices of $G$, then $\openN{u}$ cannot be a subset of $\openN{v}$.

The next result follows immediately from the definition of deletion in clutters.

\begin{proposition}
If $M$ is a clutter, and $v$ is in $E(M)$, then $G(M\backslash v)=G(M)\backslash \closedN{v}$.
\end{proposition}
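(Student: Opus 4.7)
The plan is to verify the claimed equality by directly unpacking both sides from their definitions, checking that the vertex sets and edge sets agree. There is no real obstacle here; the whole point is that the two operations --- clutter deletion on the one hand, and graph deletion of a closed neighbourhood on the other --- were set up to mirror one another, and the verification is essentially bookkeeping.

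First I would describe the vertex sets. By definition, $G(M\backslash v)$ has vertex set
\[
(E-v)\cup\{A\in\mathcal{A}\colon v\notin A\}.
\]
For the right-hand side, the crucial observation is that the neighbours of $v$ in $G(M)$ are precisely those white vertices $A\in\mathcal{A}$ with $v\in A$, since $G(M)$ is bipartite and edges record incidence. Hence $\closedN{v}=\{v\}\cup\{A\in\mathcal{A}\colon v\in A\}$, and deleting this set from $G(M)$ leaves exactly the vertex set
\[
\bigl(E\cup\mathcal{A}\bigr)-\closedN{v}=(E-v)\cup\{A\in\mathcal{A}\colon v\notin A\},
\]
matching the vertex set of $G(M\backslash v)$.

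Next I would check the edges. In $G(M\backslash v)$, a black vertex $u\in E-v$ is adjacent to a white vertex $A$ (with $v\notin A$) if and only if $u\in A$, by the definition of the incidence graph applied to $M\backslash v$. On the other side, $G(M)\backslash\closedN{v}$ inherits its edges from $G(M)$ restricted to the surviving vertices, so the same pair $u,A$ is adjacent if and only if $u\in A$ in $M$. Since no row of $M\backslash v$ has had its element set altered (rows containing $v$ were discarded outright, not modified), these adjacency conditions coincide, and the two graphs are equal. I would close with a single remark that no row of $M\backslash v$ was produced by shrinking a row of $M$, which is what makes the deletion case so clean compared to the contraction case that must be handled later.
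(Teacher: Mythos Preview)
Your proof is correct. The paper itself offers no proof of this proposition, stating only that it ``follows immediately from the definition of deletion in clutters''; your argument is precisely the routine bookkeeping that the paper leaves to the reader.
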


Clutter contraction is somewhat more complicated to observe in the incidence graph. We will use only one special case of contraction. We say that the black vertices, $u$ and $v$, are \emph{twins} if $\openN{u}=\openN{v}$.

\begin{proposition}
\label{contracttwin}
Let $M$ be a connected clutter. If $v$ and $v'$ are twin black vertices, then $G(M/v)=G(M)\ba v$ and is therefore connected.
\end{proposition}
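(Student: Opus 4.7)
The plan is to establish two things: that the set $\{A - v : A \in \mathcal{A}\}$ is already an antichain under inclusion, so that these are precisely the rows of $M/v$ with no minimality reduction required, from which $G(M/v) = G(M) \ba v$ follows immediately; and that $G(M) \ba v$ is connected. Both steps depend on the presence of the twin $v'$.

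For the antichain claim, I would take distinct $A, A' \in \mathcal{A}$ with $A - v \subseteq A' - v$ and split on how $v$ meets $A$ and $A'$. In three of the four cases (both contain $v$, neither contains $v$, or only $A'$ contains $v$) the subset relation collapses immediately to $A \subseteq A'$, contradicting the clutter property. The remaining case, $v \in A$ and $v \notin A'$, is the only one where the clutter property alone is insufficient, and it is here that the twin assumption is essential: $v \in A$ forces $v' \in A$, so $v' \in A - v \subseteq A' - v \subseteq A'$, and then $v' \in A'$ forces $v \in A'$ by the twin property, a contradiction. With the antichain property in hand, the map $A \mapsto A - v$ is a bijection on white vertices, the black vertices of both graphs are $E(M) - \{v\}$, and a black vertex $u \neq v$ is adjacent to $A - v$ in $G(M/v)$ exactly when it is adjacent to $A$ in $G(M) \ba v$; hence $G(M/v) = G(M) \ba v$.

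For the connectivity of $G(M) \ba v$, the existence of $v' \neq v$ ensures $|E(M)| \geq 2$, so $M$ is not the exceptional clutter of Proposition \ref{connectediff}, and that proposition yields the connectivity of $G(M)$. Given any two vertices $x, y$ of $G(M)$ both distinct from $v$, take a walk from $x$ to $y$ in $G(M)$; every occurrence of $v$ lies between two members of $\openN{v} = \openN{v'}$, so replacing each such occurrence by $v'$ produces a walk from $x$ to $y$ that avoids $v$.

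The main obstacle is the antichain argument — specifically the case $v \in A$, $v \notin A'$, which is the unique case where the clutter property alone does not suffice and where the twin hypothesis actually does the work. The connectivity step, by contrast, is a standard path-rerouting trick that becomes routine once the graph equality is in place.
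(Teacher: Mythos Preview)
Your proposal is correct and follows essentially the same approach as the paper: both arguments show that removing $v$ from each row cannot create a containment among white-vertex neighbourhoods, using the twin $v'$ to handle the one nontrivial case, and both reroute paths through $v'$ for connectivity. Your version is slightly more explicit---you spell out all four cases and invoke Proposition~\ref{connectediff} to justify that $G(M)$ is connected---whereas the paper compresses the antichain check into a single neighbourhood-containment argument and leaves the connectivity of $G(M)$ implicit.
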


\begin{proof}
We form $M/ v$ by removing the element $v$ from $E(M)$ and taking the rows, with $v$ deleted, which are minimal under subset-inclusion. Suppose that $G(M)\ba v$ has two distinct white vertices, $u$ and $w$, such that every neighbour of $u$ is a neighbour of $w$. This property does not hold in $G(M)$, so $v$ must have been adjacent to $u$ but not $w$. As $v'$ is a twin of $v$, then $v'$ is also adjacent to $u$ and not $w$. These adjacencies remain in $G(M)\ba v$, and so we have $\openN{u}\nsubseteq\openN{z}$. This shows that in $G(M)\ba v$, there is no pair of distinct white vertices, one of whose neighbourhood is contained in the other. It follows that $G(M)\ba v$ is the incidence graph of $M/v$. 

Finally, it is clear that $G(M)\ba v$ is connected as for every path using $v$, replacing $v$ with $v'$ gives a second path, and so deleting $v$ cannot increase the number of components in the graph.
\end{proof}

With this setup, we can immediately begin the proof of the main result.

\begin{theorem}
Let $M$ and $N$ be connected clutters and assume that $N$ is a proper minor of $M$. There exists an element, $v\in E(M)$, such that either $M\backslash v$ or $M/v$ is connected and has $N$ as a minor.
\end{theorem}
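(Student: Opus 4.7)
My plan is to induct on $|E(M)|-|E(N)|$. The base case, where this difference equals $1$, is immediate: some $v\in E(M)$ satisfies $M\ba v = N$ or $M/v = N$, and that $v$ witnesses the conclusion since $N$ is connected. For the inductive step, fix any realization $N = M\ba X/Y$ with $X\cup Y$ non-empty. The minor condition is then automatic, since any $v\in X$ has $N$ as a minor of $M\ba v$ and any $v\in Y$ has $N$ as a minor of $M/v$, so the task reduces to finding $v\in X\cup Y$ whose operation preserves connectivity.

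My first attempts are the two direct routes provided by the setup. If some $v\in Y$ has a twin in $G(M)$, Proposition~\ref{contracttwin} gives $M/v$ connected and we output $v$; otherwise I scan $v\in X$ for one with $G(M)\ba \closedN{v}$ connected, which by the preceding proposition makes $M\ba v$ connected. If neither route succeeds, I pick any $v_0\in X\cup Y$, let $M'$ be the resulting disconnected clutter, and by Proposition~\ref{connectediff} together with the connectedness of $N$ place $E(N)$ inside a single component of $G(M')$. The black vertices of the remaining components then form a set $Q$ of ``free'' elements: by commuting the operation on $v_0$ with one on any $u\in Q$ (via the order-of-operations proposition), both $M\ba u$ and $M/u$ contain $N$ as a minor. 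I then re-run the search on $Q$ using the block structure of $G(M)$: a leaf block $B$ whose non-cut vertices lie outside $V(G(N))$ yields a black non-cut vertex $u\in B$ with $\closedN{u}$ localised to $B$, so that $G(M)\ba\closedN{u}$ remains connected and $M\ba u$ is the desired output.

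The main obstacle is the contraction side without twins: Proposition~\ref{contracttwin} does not apply, and the removal of newly non-minimal white vertices in forming $G(M/v)$ can disconnect the graph even when $v$ is non-cut in $G(M)$, as already seen for the path clutter on $\{1,2,3\}$ with rows $\{1,2\},\{2,3\}$, where contracting $1$ forces $\{2,3\}$ to become non-minimal and isolates $3$. The plan is therefore forced to prefer deletions in the fallback step, and the subtle technical point is to show that the block analysis always yields a safe deletion inside $Q$: one must handle the case where the cut vertex of the chosen leaf block is itself a white neighbour of $u$ (so that removing $\closedN{u}$ also removes a cut vertex of $G(M)$) by digging further inside the block, and rule out the possibility that every black non-cut vertex in every suitable leaf block fails in this way. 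This ultimately relies on the clutter constraint that $G(M)$ admits no pair of distinct white vertices with nested open neighbourhoods, which restricts the local structure around $u$ enough to make the argument go through.
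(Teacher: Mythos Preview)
Your plan correctly isolates the crux of the problem: once you have produced an element $v_0$ with $N$ a minor of $M\ba v_0$ (or $M/v_0$) and that clutter disconnected, every black vertex $u$ in a non-$E(N)$ component of $G(M')$ is ``free'' in the sense that $N$ is a minor of $M\ba u$. Up to that point your outline agrees with the paper (this is exactly the content of its Subpropositions~\ref{subprop1}--\ref{notwin}). The divergence---and the gap---is in how you then locate a free $u$ with $M\ba u$ connected.

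Your proposed mechanism is the block tree of $G(M)$: pick a leaf block $B$ with non-cut black vertices outside $E(N)$ and take $u$ there. But this does not work as stated. First, when $G(M)$ is $2$-connected there is only one block, so the block tree gives no leverage at all; yet this is precisely the generic situation, and the theorem is not easier there. Second, even in a genuine leaf block, removing $\closedN{u}$ can disconnect the block internally (condition (b), not just the cut-vertex condition (a) you flag), and there is no reason the remaining piece stays attached to the cut vertex. Third, you need $u$ to lie in your set $Q$ of free elements (defined via components of $G(M')$), but the block decomposition of $G(M)$ bears no a~priori relation to the component decomposition of $G(M')=G(M)\ba\closedN{v_0}$; a leaf block of $G(M)$ can sit entirely inside the component of $G(M')$ containing $E(N)$. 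Your final sentence acknowledges that this is ``the subtle technical point'' and appeals to the antichain condition on white neighbourhoods, but that constraint alone is far too weak to force the structure you need; it is invoked in the paper only at isolated moments, not as a global structural tool.

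The paper's proof abandons the block tree entirely and instead works with \emph{minimal black vertices} (those whose open neighbourhood is inclusion-minimal) and \emph{minimal good components} (inclusion-minimal subgraphs arising as components of some $G\ba\closedN{u}$). The argument is an extremal one: it first shows the free deletion can be taken at a minimal black vertex, then shows every good component contains a minimal black vertex, then proves structural facts about minimal good components (every black vertex in one shares a neighbour with the defining vertex; each contains at least two black vertices; deleting the closed neighbourhood of an internal minimal black vertex reunites the rest with the outside). Only after this machinery is in place can one derive a contradiction by choosing a minimal good component inside a non-minimal $C_i$ and maximising a certain component. None of this is visible from the block tree, and the induction on $|E(M)|-|E(N)|$ you set up is never actually invoked in your argument---nor could it be directly, since the intermediate clutter $M'$ is disconnected.
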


\begin{proof}
Assume that $M$ and $N$ form a counterexample to the theorem. We will let $G$ stand for $G(M)$.

\begin{subproposition}
\label{subprop1}
There is an element $v\in E(M)$ such that $N$ is a minor of $M\ba v$.
\end{subproposition}

\begin{subproof}
Assume that this is not the case. Then $N$ is a minor of $M/u$ for some $u\in E(M)$. Now $M/u$ is not connected, or else $M$ and $N$ would not give us a counterexample. Therefore $G(M/u)$ is disconnected by Proposition \ref{connectediff}. Since $N$ is connected, it follows easily that $E(N)$ is contained in a connected component of $G(M/u)$. Choose $C$, a component of $G(M/u)$ such that $C$ does not contain $E(N)$. If $C$ consists of a single white vertex, then $M/u$ has an empty row, and this means that it has exactly one row. Hence $G(M/u)$ contains a single white vertex and no edges. This means that $N$ contains at most one element, or else it is not connected. If $M/u=N$, then $M/u$ is connected, and so $E(M/u)$ must contain an element that is not in $E(N)$. Let $u'$ be such an element. Then $u'$ is an isolated black vertex in $G(M/u)$, so $N$ is a minor of $M/u\ba u'$ and hence of $M\ba u'$, contrary to assumption. We must now assume that $C$ contains a black vertex, $u'$. As $C$ is a component of $G(M/u)$ and $C$ does not contain any element of $E(N)$, we see that $N$ is a minor of $M/u\ba u'$ and hence of $M\ba u'$, which is a contradiction.
\end{subproof}

Note that if $u$ and $v$ are black vertices, then $\openN{u}$ may be a subset of $\openN{v}$. Say that $v$ is a \emph{minimal} black vertex if there is no black vertex, $u$, such that $\openN{v}$ properly contains $\openN{u}$.

\begin{subproposition}
There is a minimal black vertex, $v$, of $G$, such that $N$ is a minor of $M\ba v$.
\end{subproposition}

\begin{subproof}
Assume the statement is false. By \ref{subprop1}, we can choose a non-minimal black vertex $v'$ so that $N$ is a minor of $M\backslash v'$. Let $v'$ have the smallest possible degree. First assume $|E(N)|>1$, so $G(N)$ is connected by Proposition \ref{connectediff}. Say $G\backslash\closedN{v'}$ has components $C_1,\ldots,C_t$, where $E(N)\subseteq C_1$. As $v'$ is not minimal, we will choose a minimal black vertex $v$ with $\openN{v}\subset\openN{v'}$. Note this implies $v$ is an isolated vertex in $G\backslash\closedN{v'}$, and so is one of the components $C_1,\ldots,C_t$. Then $v$ is clearly not in $C_1$, so $N$ is a minor of $M\backslash v'\backslash v$, and hence of $M\backslash v$, as desired.

\begin{figure}[ht]
\centering
\includegraphics[scale=0.75,trim={7cm 16cm 7cm 4cm},clip]{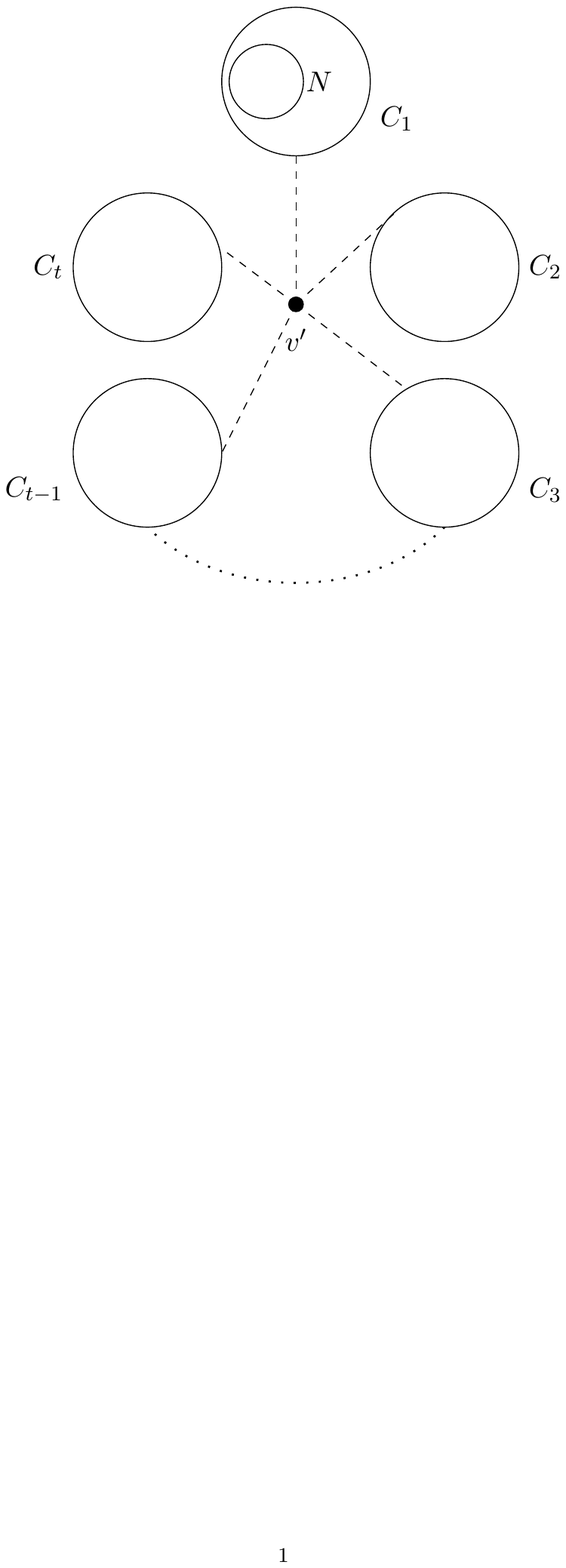}
\end{figure}

Now we consider the case that $|E(N)|$ is at most $1$. We will still assume $v'$ is not minimal, so $\openN{v}$ is properly contained in $\openN{v'}$, for some minimal black vertex $v$. If $v\notin C_1$, then $M$ is a minor of $M\backslash v'\backslash v$, and hence of $M\backslash v$, for the same reasons as in the previous case. Thus $v\in C_1$, implying $C_1$ is a single vertex as $v$ is isolated in $G\ba\closedN{v'}$. If $|E(N)|=0$, then $v$ can be any minimal black vertex and the result follows as $N$ will be a minor of $M\ba v$. Thus we assume that $E(N)=\{v\}$. Note that $v$ is isolated after deleting $\closedN{v'}$. This means $N$ is the clutter with $E(N)=\{v\}$ and no rows. Choose a black vertex $u'\in C_2$. Then $N$ is a minor of $M\ba u'$. If $u'$ is a minimal black vertex, the result follows. So let $u$ be a minimal black vertex with $\openN{u}\subset\openN{u'}$. If $u\in C_2$, then $N$ is a minor of $M\ba u$ and the result also follows. If $u\notin C_2$, then no  neighbour of $u$ is in $C_{2}$, but any such neighbour is also a neighbour of $u'$. It follows that all the neighbours of $u$ are also neighbours of $v'$. This means $u$ is an isolated vertex after deleting $\closedN{v'}$, so $N$ is a minor of $M\ba u$.
\end{subproof}


Now fix $v$ to be a minimal black vertex such that $N$ is a minor of $M\ba v$. Let $C_1,\ldots,C_t$ be the connected components of $G(M\ba v)$, where $t\geq 2$. Since $N$ is connected, we can assume that $E(N)$ is contained in $C_1$.

\begin{subproposition}
\label{notwin}
If $u$ is a black vertex which is not in $C_1$, then $u$ has no twin vertex.
\end{subproposition}

\begin{subproof}
First suppose that $u=v$, and let $u'$ be a twin of $u$. We know that $N$ is a minor of $M\ba u$, and we have that $u'$ is isolated in $G(M\ba u)$. It follows easily from Propositions \ref{connectediff} and \ref{contracttwin} that $M/u'$ is connected. Moreover, as $G(M/u')=G(M)\ba u'$, the only way $M/u'$ cannot contain $N$ as a minor is if $E(N)=\{u'\}$. But this would imply that $M/v$ does contain $N$ as a minor, and is connected, and the theorem would follow. Now let $u\neq v$. Assume that $u$ is a black vertex in $C_i$ where $i\neq 1$, and assume that $u'$ is a twin of $u$. Since $u$ is contained in $C_i$, a component of $G(M\ba v)$, and $E(N)$ is contained in $C_1$, it follows that $N$ is a minor of $M\ba v/u$, and hence of $M/u$. But Propositions \ref{connectediff} and \ref{contracttwin} imply that $M/u$ is connected, a contradiction to our counterexample.
\end{subproof}

If $u$ is any minimal black vertex, and $C$ is any connected component of $G\backslash\closedN{u}$, then we define $C$ to be a \emph{good component}. Therefore $C_1,\ldots,C_t$ are good components. The next claim shows that good components contain minimal black vertices.

\begin{subproposition}
\label{minimal}
Let $u$ be a minimal black vertex, and assume that $u$ is not in $C_{1}$. Let $C$ be a component of $G\ba\closedN{u}$.
Then $C$ contains a minimal black vertex.
\end{subproposition}

\begin{subproof}
 First, we prove that $C$ contains a black vertex. If not, then $C$ is a single white vertex, $w$. Since $G(M)$ is connected, $w$ is adjacent with a black vertex, $x$, in $G(M)$. Neither $w$ nor $x$ belongs to $\closedN{u}$, so $w$ and $x$ and adjacent in $G\ba \closedN{u}$. Thus $C$ contains the black vertex, $x$, contrary to hypothesis. Therefore $C$ contains at least one black vertex. Let $x'$ be an arbitrary black vertex in $C$. If $x'$ is minimal, the result follows. Hence assume that there is a black vertex $x$ with $\openN{x}\subset\openN{x'}$. Choose $x$ so that its degree is as small as possible, implying that $x$ is minimal. If $x\in C$, the result follows, so say $x\in D$ where $D$ is some connected component of $G\ba\closedN{u}$ other than $C$. If $x$ is adjacent to a white vertex in $D$, we will clearly not have $\openN{x}\subset\openN{x'}$. So $D=\{x\}$, and $x$ is adjacent only to neighbours of $u$. As $u$ is a minimal black vertex, we deduce that $x$ and $u$ are twin vertices, which contradicts \ref{notwin}.
\end{subproof}

 Note that it is possible for one good component to be contained in another. Let $u$ be a minimal black vertex. We say that a component $C$ in $G\backslash\closedN{u}$ is \emph{minimal} if the vertex set of $C$ does not properly contain the vertex set of a good component.

\begin{subproposition}
\label{2point2}
Let $u$ be a minimal black vertex, and let $C$ be a component in $G\backslash\closedN{u}$ that is disjoint from $C_{1}$. Assume that $C$ is a minimal good component. If $w$ is a black vertex in $C$, then $w$ has a common neighbour with $u$.
\end{subproposition}

\begin{subproof}
Assume there is a black vertex in $C$ that has no common neighbour with $u$. We will prove that there is a minimal black vertex with this property. Let $w'$ be an arbitrary black vertex in $C$ that has no white neighbour in common with $u$. If $w'$ is not a minimal black vertex, then we can assume that $w$ is a minimal black vertex and that $\openN{w}\subset \openN{w'}$. Then $w$ is joined to $w'$ by a path of length $2$, and this path does not contain a white vertex adjacent with $u$, since $w'$ does not have a neighbour in common with $u$. This means that $w$ and $w'$ are joined by a path in $G\backslash\closedN{u}$, so both are in $C$. In fact, $w$ cannot have a neighbour in common with $u$, because any such neighbour would also be a neighbour of $w'$. Thus $w$ is a minimal black vertex in $C$ having no common neighbours with $u$.

Any white vertex not in $C$ that is adjacent to a black vertex in $C$ must also be adjacent to $u$. It immediately follows that any white vertex not in $C$ is not adjacent to $w$. Therefore every vertex not in $C$ is also a vertex in $G\backslash\closedN{w}$. This implies that the vertices not in $C$ are contained in a connected component of $G\backslash\closedN{w}$. Since $w\in C$, and $C$ is disjoint from $C_{1}$, it follows that $N$ is a minor of $M\ba w$. Therefore $M\ba w$ is not connected, since $M$ and $N$ form a counterexample to the theorem. It follows that $G\ba \closedN{w}$ is not connected. Let $D$ be a connected component in $G\backslash\closedN{u}$ that is different from the one containing the vertices not in $C$. Every vertex in $D$ is also in $C$. But the vertex set of $D$ is a proper subset of the vertex set of $C$, since it doesn't contain $w$. Recall that $w$ is a minimal black vertex, and thus $D$ is a good component. This contradicts the minimality of $C$.
\end{subproof}

\begin{subproposition}
\label{alsonotsingle}
Let $u$ be a minimal black vertex, where $u\notin C_1$, and let $C$ be a component in $G\backslash\closedN{u}$. Assume that $C$ is a minimal good component. Then $C$ contains at least two black vertices.
\end{subproposition}

\begin{subproof}
By \ref{minimal}, we see that $C$ contains a minimal black vertex, $w$. Assume that $w$ is the only black vertex in $C$. We also assume that $C$ contains a white vertex, $x$. Then $w$ is the only neighbour of $x$. By \ref{2point2}, we can let $y$ be a common neighbour of $w$ and $u$. Then $\{w\}=\openN{x}\subseteq\openN{y}$, a contradiction, since $G$ is the incidence graph of a clutter. Therefore $C=\{w\}$. Thus $\openN{w}\subseteq\openN{u}$. Since $u$ is a minimal black vertex, this means that $\openN{w}=\openN{u}$, so $u$ has a twin vertex. As $u\notin C_1$, this is a contradiction to \ref{notwin}.
\end{subproof}



 \begin{subproposition}
 \label{nbhd}
 Let $u$ be a minimal black vertex that is not in $C_{1}$. Let $C$ be a component of $G\ba \closedN{u}$, and assume that $C$ is a minimal good component. If $w$ is a minimal black vertex in $C$, then the component of $G\ba \closedN{w}$ that contains $u$ also contains every vertex in $C\ba \closedN{w}$.
\end{subproposition}

\begin{subproof}
Note that $C$ contains at least two black vertices by \ref{alsonotsingle}. Therefore $C\ba \closedN{w}$ contains at least one vertex. Let $x$ be an arbitrary vertex in $C\ba \closedN{w}$, and let $C'$ be the component of $G\ba \closedN{w}$ containing $x$. There must be a vertex in $C'$ that is not in $C$, for otherwise the vertex set of $C'$ is a proper subset of the vertex set of $C$, which contradicts the minimality of $C$. It follows that in $G\ba\closedN{w}$, there is a path from $x$ to a vertex not in $C$. Any such path must contain a neighbour of $u$. It now follows that in $G\ba\closedN{w}$ there is a path from $x$ to $u$. As $x$ was chosen arbitrarily from $C\ba \closedN{w}$, we see that the component of $G\ba\closedN{w}$ that contains $u$ also contains every vertex in $C\ba \closedN{w}$, exactly as desired.
\end{subproof}

\begin{subproposition}
At least one of the components $C_2,\ldots,C_t$ is not minimal.
\end{subproposition}

\begin{subproof}
Assume that $C_2,\ldots, C_t$ are all minimal good components. Say a black vertex, $u$, in one of $C_2,\ldots,C_t$ is \emph{interesting} if $v$ is in the same component as $C_1$ in $G\backslash\closedN{u}$. Assume $u$ is interesting, and chosen so that $|\openN{u}\cap\openN{v}|$ is smallest possible. We assume that $u$ is in $C_{i}$, where $i\geq 2$. Note that $N$ is a minor of $M\ba u$, and hence $M\ba u$ is not connected. Thus $G\ba \closedN{u}$ is not connected. Let $D$ be a component in $G\backslash\closedN{u}$ not containing $v$. Then $D$ has no vertex in common with $C_{i}\ba\closedN{u}$, by \ref{nbhd}. It also has no vertex in common with $C_{1}$, since $v$ is in the same component as $C_{1}$ in $G\ba \closedN{u}$. Therefore any vertex that is not in $D$, but is adjacent to a vertex in $D$, must be a neighbour of $w$ that is not in $C_{i}$. Any such vertex is also a neighbour of $v$. Hence $D$ is a connected component of $G\ba \closedN{v}$. Now we can assume that $D=C_j$, where $i\ne j$ and $j\geq 2$. Choose $w$, a black vertex in $C_j$. In order for $C_j$ to be disconnected from the component containing $v$ in $G\ba\closedN{u}$, we must have that $\openN{w}\cap\openN{v}\subseteq\openN{u}\cap\openN{v}$, implying that $w$ is interesting. Now $\openN{w}\cap\openN{v}=\openN{u}\cap\openN{v}$ by the choice of $u$. Since $w$ was arbitrary, this means $C_j\ba\closedN{w}$ is disconnected from $v$ in $G\backslash\closedN{w}$, contradicting \ref{nbhd}.

Now assume that there is no interesting vertex. Let $y$ be a vertex in $\openN{v}$ that is adjacent to a vertex in $C_1$. Let $w$ be an arbitrary black vertex in $C_2$. Then $C_1$ is disconnected from $v$ in $G\backslash\closedN{w}$, as $w$ is not interesting by assumption, which implies $y\in\openN{w}$. As $w$ was arbitrary, $y$ is adjacent to every black vertex in $C_2$, as well as $v$. Thus if $x$ is a white vertex in $C_2$, then $\openN{x}\subset\openN{y}$, a contradiction as $G$ is the incidence graph of a clutter.
\end{subproof}

From now on, we assume that $C_2$ is a good component, but not minimal. Let $C$ be a minimal good component, and assume that the vertex set of $C$ is properly contained in the vertex set of $C_2$. Let $u$ be a minimal black vertex such that $C$ is a component of $G\ba\closedN{u}$.

\begin{subproposition}
$u\in C_2$.
\end{subproposition}

\begin{subproof}
If this is not the case, then any common neighbour of $u$ and a vertex in $C$ must be a common neighbour of $v$ and a vertex in $C_2$. This implies that $C$ is a connected component of $G\ba \closedN{v}$, which is impossible because the veretx set of $C$ is properly contained in the vertex set of a connected component of $G\ba \closedN{v}$.
\end{subproof}

By \ref{minimal}, we can choose a minimal black vertex, $w$, in $C$. Let $H$ be the component of $G\ba \closedN{w}$ that contains $u$. By \ref{nbhd}, $H$ also contains $C\ba\closedN{w}$. Assume that we have chosen $C$, $u$, and $w$, so that $H$ is as large as possible.

\begin{subproposition}
\label{twopointsix}
Let $D$ be a component of $G\ba \closedN{w}$ not equal to $H$. Then $D$ is a minimal good component.
\end{subproposition}

\begin{subproof}
Note that $D$ is a good component, since it is disconnected when we delete $w$ and its neighbours. Assume $D$ is not minimal. Let $D'$ be a minimal good component such that the vertex set of $D'$ is properly contained in the vertex set of $D$. Let $u'$ be a minimal black vertex such that $D'$ is a component in $G\backslash\closedN{u'}$. Suppose $u'\notin D$. Then any neighbour of $u'$ that is adjacent to a vertex in $D'$ is not in $D$, but is adjacent to a vertex in $D$. The only such vertices are in $\closedN{w}$. This means that $D'$ is a component of $G\ba \closedN{w}$, but this is impossible, since the vertex set of $D'$ is properly contained in a the vertex set of a component of $G\ba \closedN{w}$. Therefore $u'$ is in $D$.

\begin{figure}[ht]
\centering
\includegraphics[scale=0.75,trim={6cm 17.5cm 6.5cm 4cm},clip]{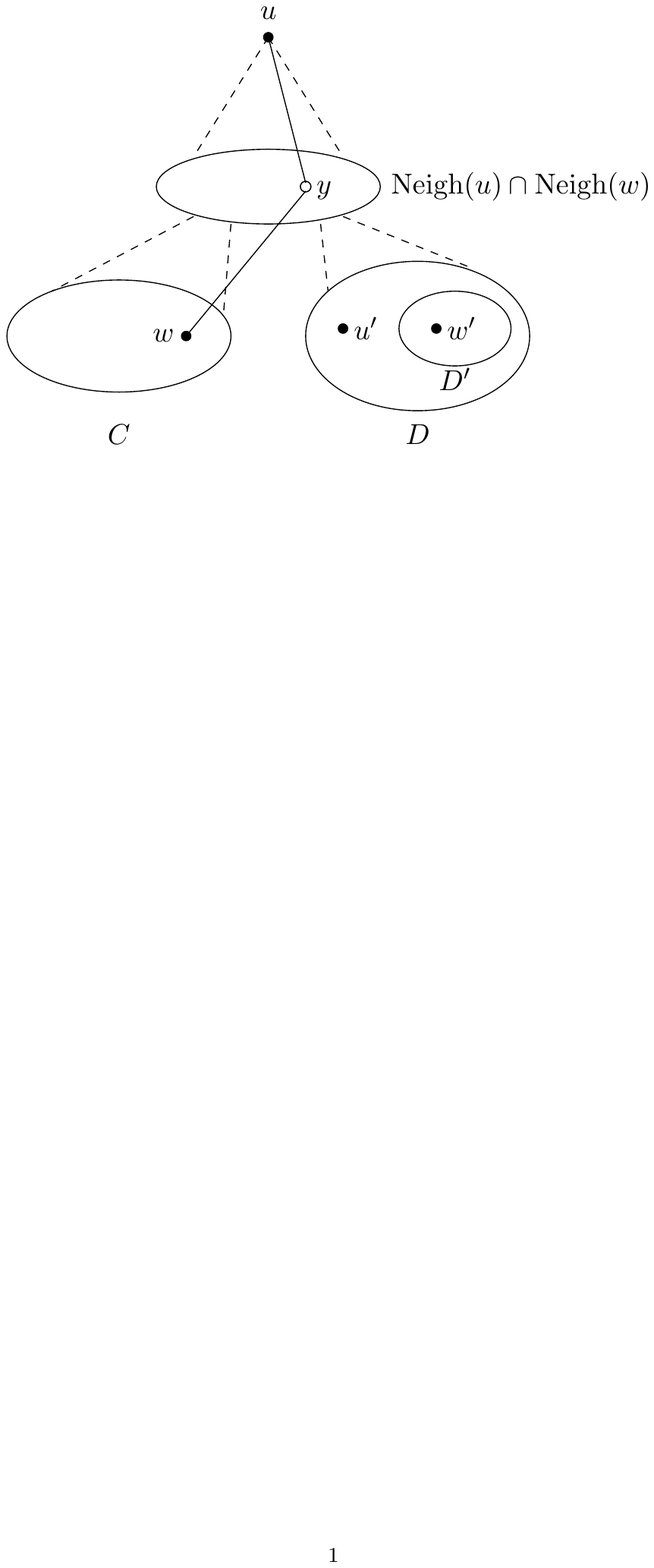}
\end{figure}

Assume that there is vertex, $y$, in $\openN{w}\cap\openN{u}$ that is adjacent to $u'$. Let $w'$ be an arbitrary minimal black vertex contained in $D'$, and assume that $y$ is not a neighbour of $w'$. Let $x$ be an arbitrary vertex in $H$. Then $x$ and $u$ are joined by a path, $P_{x}$, in $G\ba\closedN{w}$. By concatenating $P_{x}$ with the two edges $uy$ and $yu'$, we obtain a path joining $x$ to $u'$. Assume that this is not a path in $G\ba\closedN{w'}$, so that some vertex in the path is adjacent to $w'$. Such a vertex can only be a white vertex, so it is not $u$ or $u'$. Moreover, we have assumed that $y$ is not adjacent to $w'$. Therefore some vertex in $P_{x}$ is adjacent to $w'$. But $P_{x}$ is a path in $G\ba\closedN{w}$ that contains $u$, and $u$ is not in $D$, a connected component of $G\ba \closedN{w}$. Therefore there can be no edge from a vertex in $D$, such as $w'$, to a vertex in $P_{x}$. Now we see that the component of $G\ba\closedN{w'}$ that contains $u'$ also contains $x$. As $x$ was arbitrary, this component contains every vertex in $H$, as well as $u'$. This violates our choice of $C$, $u$, and $w$, since we could have chosen $D'$, $u'$, and $w'$ instead. We conclude that $y$ is adjacent to $w'$. Since $w'$ was an arbitrary minimal black vertex in $D'$, we conclude that every minimal black vertex in $D'$ is adjacent to $y$. Next we will show that every black vertex in $D'$ is adjacent to $y$.

Let $x'$ be a black vertex in $D'$. If $x'$ is minimal, then we are done, so assume otherwise. Then there is a black vertex, $x$, such that $\openN{x}\subset \openN{x'}$. We may as well assume that $x$ is a minimal black vertex. If $x$ is in $D'$, then $x$ is adjacent to $y$, so $x'$ is adjacent to $y$, as desired. Therefore we assume that $x$ is not in $D'$. Then $\openN{x}\subseteq \openN{u'}$. As $u'$ is a minimal black vertex, we deduce that $\openN{x}= \openN{u'}$. Since $u'$ is not contained in $C_{1}$, it cannot be the case that $u'$ has a twin vertex, by \ref{notwin}. Therefore $x$ and $u'$ are the same vertex. But $y$ is adjacent to $u'$, and now we again conclude that $x$, and hence $x'$, is adjacent to $y$, as desired. Therefore every black vertex in $D'$ is adjacent to $y$. This means that if $z$ is an arbitrary white vertex in $D'$, then every neighbour of $z$ is a neighbour of $y$, so $\openN{z}\subseteq\openN{y}$ which is a contradiction to the fact that $G$ is the incidence graph of a clutter. We must conclude that $u'$ is not adjacent to any vertex in $\openN{w}\cap\openN{u}$. This also means that no black vertex in $D'$ can be adjacent to a vertex in $\openN{w}\cap\openN{u}$, since any vertex that is not in $D'$, but is adjacent to a black vertex in $D'$, is adjacent to $u'$.

Let $P$ be a shortest-possible path between $u$ and $u'$ in $G$. First assume that there is no vertex in $P$ that is adjacent to a vertex in $D'$. Let $w'$ be an arbitrary minimal black vertex in $D'$. Then $P$ is a path from $u$ to $u'$ in $G\ba\closedN{w'}$. If $x$ is an arbitrary vertex in $H$, then $x$ is joined by a path, $P_{x}$, to $u$ in $G\ba\closedN{w}$. Since $w'$ is not adjacent to any vertex in $\openN{u}\cap\openN{w}$, we see that $P_{x}$ is also a path in $G\ba\closedN{w'}$. By concatenating $P_{x}$ and $P$, we obtain a path from $x$ to $u'$ in $G\ba \closedN{w'}$.  Therefore the component of $G\ba\closedN{w'}$ that contains $u'$ also contains every vertex in $H$, and we again have a contradiction to our choice of $C$, $u$, and $w$. Thus there is a vertex in $P$ that is a neighbour of a vertex in $D'$.

Note that any vertex not in $D'$ that is a neighbour of a vertex in $D'$ is in $\openN{u'}$, as $D'$ is a connected component of $G\ba\closedN{u'}$. Since $P$ is a shortest path from $u$ to $u'$, we see that $P$ contains exactly one vertex, $y$, that is adjacent to a vertex in $D'$. Let $w'$ be an arbitrary minimal black vertex in $D'$. If $y$ is not adjacent to $w'$, then $P$ is a path from $u$ to $u'$ in $G\ba\closedN{w'}$. We get a contradiction to our choice of $C$, $u$, and $w$, exactly as before. Therefore $y$ is adjacent to every minimal black vertex in $D'$.

We show that $y$ is adjacent to every black vertex in $D'$. Let $x'$ be a black vertex in $D'$, and assume that $x'$ is not adjacent to $y$. Then $x'$ is not a minimal black vertex, so let $x$ be a minimal black vertex such that $\openN{x}\subset\openN{x'}$. If $x$ is in $D'$, then $y\in\openN{x}$, and we have a contradiction, so $x\notin D'$. This means that $\openN{x}\subseteq \openN{u'}$. Because $u'$ is a minimal black vertex, and does not have a twin by \ref{notwin}, this implies that $x=u'$. But $y$ is in $\openN{u'}$, so we again see that $y$ is adjacent to $x$. Thus $y$ is adjacent to every black vertex in $D'$. If $z$ is a white vertex in $D'$, then every neighbour of $z$ is a neighbour of $y$, which is impossible. From this final contradiction we see that $D$ must be a minimal good component.
\end{subproof}


Now we can finish the proof of the main theorem. Let $D$ be a component of $G\ba \closedN{w}$ that is not equal to $H$. By \ref{twopointsix}, we see that $D$ is a minimal good component. Any vertex not in $D$, but adjacent to a vertex in $D$, must be in $\openN{u}\cap\openN{w}$. It therefore follows that $D$ is a component of $G\ba \closedN{u}$. By \ref{minimal} we see that $D$ contains a minimal black vertex, $w'$. Let $x$ be an arbitrary vertex in $H$, and let $P_{x}$ be a path from $x$ to $u$ in $G\ba\closedN{w}$. Assume that $P_{x}$ is not a path in $G\ba \closedN{w'}$. Then some vertex of $P_{x}$ is adjacent to $w'$. No vertex in $P_{x}$ is in $D$, since $P_{x}$ is a path in $G\ba\closedN{w}$ containing $u$, and $D$ is a component of $G\ba\closedN{w}$ that does not contain $u$. Therefore there is a vertex in $P_{x}$ that is not in $D$, but is adjacent to a vertex in $D$ (namely $w'$). Any such vertex must be in $\openN{u}\cap\openN{w}$. But this is impossible, because no vertex of $P_{x}$ is in $\closedN{w}$. Let $H'$ be the component of $G\ba\closedN{w'}$ that contains $u$. We have just shown that $H'$ contains all the vertices of $H$. By \ref{nbhd}, we see that $H'$ also contains $D-\closedN{w'}$, and \ref{alsonotsingle} implies that this set is not empty. Thus we have contradicted our choice of $C$, $u$, and $w$, because we could have chosen $D$, $u$, and $w'$ instead. Thus there is no possible counterexample, and the result follows.
\end{proof}

This research did not receive any specific grant from funding agencies in the public, commercial, or
not-for-profit sectors.

\bibliographystyle{acm}
\bibliography{library} 

\end{document}